\documentclass{amsart}

\usepackage[T1]{fontenc}
\usepackage[utf8]{inputenc}
\usepackage{lmodern}
\usepackage{mathtools}
\usepackage{microtype}
\usepackage[sort&compress,numbers,square]{natbib}
\usepackage{hyperref}
\hypersetup{
  pdfauthor={Zijia Li, Josef Schicho, Hans-Peter Schröcker},  pdftitle={The Rational Motion of Minimal Dual Quaternion Degree With Prescribed Trajectory},  pdfkeywords={rational motion, dual quaternion, minimal degree, trajectory generation, circularity},    hidelinks=true,}

\newtheorem{theorem}{Theorem}
\newtheorem{lemma}{Lemma}
\newtheorem{corollary}{Corollary}
\theoremstyle{definition}
\newtheorem{definition}{Definition}
\theoremstyle{remark}
\newtheorem{remark}{Remark}
\newtheorem{example}{Example}

\title[The Rational Motion of Minimal Dual Quaternion Degree \ldots]{The Rational Motion of Minimal Dual Quaternion Degree With Prescribed Trajectory}
\date{\today}

\author{Zijia Li}
\address[Zijia Li]{Johann Radon Institute for Computational and Applied Mathematics, Austrian Academy of Science, Altenberger Str.~69, 4040 Linz, Austria}
\urladdr{https://people.ricam.oeaw.ac.at/z.li/}
\email{zijia.li@oeaw.ac.at}

\author{Josef Schicho}
\address[Josef Schicho]{Research Institute for Symbolic Computation,
  Johannes Kepler University Linz, Schloss Hagenberg, 4232 Hagenberg, Austria}
\urladdr{http://www.risc.jku.at/people/jschicho/}
\email{josef.schicho@risc.jku.at} 
\author{Hans-Peter Schröcker}
\address[Hans-Peter Schröcker]{Unit Geometry and CAD, University of Innsbruck, Technikerstr.~13, 6020 Innsbruck, Austria}
\urladdr{http://geometrie.uibk.ac.at/schroecker/}
\email{Hans-Peter Schröcker}

\keywords{motion polynomial, rational curve, factorisation, circularity, dual quaternion}
\subjclass[2010]{Primary 70B05}

\newcommand{\C}{\mathbb{C}}
\newcommand{\D}{\mathbb{D}}
\renewcommand{\H}{\mathbb{H}}
\renewcommand{\P}{\mathbb{P}}
\newcommand{\R}{\mathbb{R}}
\renewcommand{\DH}{\D\H}
\newcommand{\qi}{\mathbf{i}}
\newcommand{\qj}{\mathbf{j}}
\newcommand{\qk}{\mathbf{k}}
\newcommand{\eps}{\varepsilon}
\newcommand{\linspan}[1]{\langle #1 \rangle}
\newcommand{\cj}[1]{\overline{#1}}

\newcommand{\SE}[1][3]{\mathrm{SE}(#1)}
\newcommand{\SQ}{S}
\newcommand{\peq}{\equiv}
\DeclareMathOperator{\mrpf}{mrpf}

\begin{document}

\begin{abstract}
  We give a constructive proof for the existence of a unique rational
  motion of minimal degree in the dual quaternion model of Euclidean
  displacements with a given rational parametric curve as
  trajectory. The minimal motion degree equals the trajectory's degree
  minus its circularity. Hence, it is lower than the degree of a
  trivial curvilinear translation for circular curves.
\end{abstract}

\maketitle

\section{Introduction}
\label{sec:introduction}

A rational motion is a motion with only rational trajectories. In the
dual quaternion model of $\SE$, the group of rigid body displacements,
\cite[Ch.~9]{selig05} it is described by a rational curve on the Study
quadric \cite{juettler93}. In this article we construct a rational
motion of minimal degree in the dual quaternion model with a given
rational curve as trajectory, and we show that this motion is unique
up to coordinate changes. This is an interesting result in its own
right but it also has a certain potential for applications in computer
graphics, computer aided design or mechanism science.

Usually, one defines the degree of a rational motion as the maximal
degree of a trajectory \cite{juettler93}. With this concept of motion
degree, our problem becomes trivial as the curvilinear translation
along the curve is already minimal. As we shall see, it is also
minimal with respect to the dual quaternion degree if the prescribed
trajectory is generic. The situation changes, however, if the
trajectory is \emph{circular}, that is, it intersects the absolute
circle at infinity. In this case, the minimal achievable degree in the
dual quaternion model is the curve degree minus half the number of
conjugate complex intersection points with the absolute circle at
infinity (the curve's \emph{circularity}).

We will see that twice the circularity of a trajectory equals the
trajectory degree minus the degree defect in the spherical component
of the minimal motion. This leads to the rather strange observation
that generic rational motions (without spherical degree defect) have
very special (entirely circular) trajectories. Conversely, the minimal
motion to generic (non-circular) curves are curvilinear translations
which are special in the sense that their spherical degree defect is
maximal.

We continue this article with an introduction to the dual quaternion
model of rigid body displacements in
\autoref{sec:dual-quaternion-model}. There we also introduce motion
polynomials and their relation to rational motions. Our results are
formulated and proved in \autoref{sec:rational-motions}. The proof of
the central result (\autoref{th:2}) is constructive and can be used to
actually compute the minimal rational motion by a variant of the
Euclidean algorithm. We illustrate this procedure by two examples.

\section{The dual quaternion model of rigid body displacements}
\label{sec:dual-quaternion-model}

In this article, we work in the dual quaternion model of the group of
rigid body displacements. This model requires a minimal number of
parameters while retaining a bilinear composition law. Moreover, it
provides a rich algebraic and geometric structure. It is, for example,
possible to use a variant of the Euclidean algorithm for computing the
greatest common divisor (gcd) of two polynomials. This section
presents the necessary theoretical background on dual quaternions.

\subsection{Dual quaternions}
\label{sec:dual-quaternions}

The set $\DH$ of dual quaternions is an eight-dimensional associative
algebra over the real numbers. It is generated by the base elements
\begin{equation*}
  1,\quad
  \qi,\quad
  \qj,\quad
  \qk,\quad
  \eps,\quad
  \eps\qi,\quad
  \eps\qj,\quad
  \eps\qk
\end{equation*}
and the non-commutative multiplication is determined by the relations
\begin{equation*}
  \qi^2 = \qj^2 = \qk^2 = \qi\qj\qk = -1,\quad
  \eps^2 = 0,\quad
  \qi\eps = \eps\qi,\quad
  \qj\eps = \eps\qj,\quad
  \qk\eps = \eps\qk.
\end{equation*}
As important sub-algebras, the algebra of dual quaternions contains
the real numbers $\R = \linspan{1}$, the complex numbers
$\C = \linspan{1, \qi}$, the dual numbers $\D = \linspan{1, \eps}$,
and the quaternions $\H = \linspan{1,\qi,\qj,\qk}$ (angled brackets
denote a linear span). A dual quaternion may be written as
$h = p + \eps q$ where $p$, $q \in \H$ are quaternions. The conjugate
dual quaternion is $\cj{h} = \cj{p} + \eps\cj{q}$ and conjugation of
quaternions is done by multiplying the coefficients of $\qi$, $\qj$,
and $\qk$ with $-1$. It can readily be verified that the dual quaternion
norm, defined as $\Vert h \Vert = h\cj{h}$, equals
$p\cj{p} + \eps (p\cj{q} + q\cj{p})$. It is a dual number. The
non-invertible dual quaternions $h = p + \eps q$ are precisely those
with vanishing primal part $p = 0$.

An important application of dual quaternions is the modelling of rigid
body displacements. The group of dual quaternions of unit norm modulo
$\{\pm 1\}$ is isomorphic to $\SE$, the group of rigid body
displacements. A unit dual quaternion $h = p + \eps q$ acts on a point
$x$ in the three dimensional real vector space
$\linspan{\qi, \qj, \qk}$ according to
\begin{equation}
  \label{eq:1}
  x \mapsto px\cj{p} + p\cj{q} - q\cj{p} = px\cj{p} +2p\cj{q}.
\end{equation}
Note that $-q\cj{p} = p\cj{q}$ because of the unit norm condition.  It
is convenient and customary to projectivise the space $\DH$ of dual
quaternions, thus arriving at $\P^7$, the real projective space of
dimension seven. Then, the unit norm condition can be relaxed to the
non-vanishing of $p\cj{p}$ and the vanishing of $p\cj{q} + q\cj{p}$.
In a geometric language, $\SE$ is isomorphic to the points of a
quadric $\SQ \subset \P^7$, defined by $p\cj{q} + q\cj{p} = 0$, minus
the points of a three-dimensional space, defined by $p = 0$. The
quadric $\SQ$ is called the \emph{Study quadric.} In this setting, the
map \eqref{eq:1} becomes
\begin{equation*}
  x \mapsto \frac{px\cj{p} + p\cj{q} - q\cj{p}}{p\cj{p}}
          = \frac{px\cj{p} + 2p\cj{q}}{p\cj{p}}.
\end{equation*}
The action of $h = p + \eps q$ with $p \neq 0$,
$p\cj{q} + q\cj{p} = 0$ can be extended to real projective three-space
$\P^3$, modelled as projective space over
$\linspan{1, \qi, \qj, \qk}$. The point $x$ represented by
$x_0 + x_1\qi + x_2\qj + x_3\qk$ is mapped according to
\begin{equation*}
  x \mapsto x_0p\cj{p} + p(x_1\qi + x_2\qj + x_3\qk)\cj{p} + 2x_0p\cj{q} =
            px\cj{p} + 2x_0p\cj{q}.
\end{equation*}
This is a convenient representation for studying rational curves as
trajectories of rational motions.

\subsection{Rational motions and motion polynomials}
\label{sec:motion-polynomials}

In the projective setting, a rational motion is simply a curve in the
Study quadric $\SQ$ that admits a parameterisation by a polynomial
\begin{equation}
  \label{eq:2}
  C = \sum_{i=0}^n c_it^i
\end{equation}
with dual quaternion coefficients $c_0,\ldots,c_n \in \DH$. The
non-commutativity of $\DH$ necessitates some rules concerning notation
and multiplication: Polynomial multiplication is defined by the
convention that the indeterminate commutes with all coefficients, the
ring of these polynomials in the indeterminate $t$ is denoted by
$\DH[t]$, the subring of polynomials with quaternion coefficients is
$\H[t]$. We always write coefficients to the left of the indeterminate
$t$. This convention is sometimes captured in the term ``left
polynomial'' but we will just speak of a ``polynomial''.

Evaluating $C$ at different values $t \in \R$ gives points of a
rational curve in $\P^7$. We also define
$C(\infty) \coloneqq c_n = \lim_{t \to \infty} t^{-n} C(t)$ in order
to obtain the complete curve as image of
$\P^1 \coloneqq \R \cup \{ \infty \}$ under the map $t \mapsto
C(t)$. A similar convention is used for rational curves in~$\P^3$.

The conjugate to the polynomial \eqref{eq:2} is
\begin{equation*}
  \cj{C} \coloneqq \sum_{i=0}^{n} \cj{c_i}t^i.
\end{equation*}
It can readily be verified that the \emph{norm polynomial} $C\cj{C}$
has dual numbers as coefficients. If $C\cj{C}$ has even \emph{real}
coefficients and the leading coefficient $c_n$ is invertible, we call
$C$ a \emph{motion polynomial.} This is motivated by the observation
that the curve parameterised by a motion polynomial is contained in
the Study quadric, that is, it parameterises indeed a rigid body
motion. Writing $C = P + \eps Q$ with $P$, $Q \in \H[t]$, motion
polynomials are characterised by the vanishing of 
$P\cj{Q}+Q\cj{P}$.

The trajectory of the point $x = x_0 + x_1\qj + x_2\qj + x_3\qk \in \P^3$ is
\begin{equation}
  \label{eq:3}
  t \mapsto Px\cj{P} + 2x_0P\cj{Q},\quad
  t \in \P^1.
\end{equation}
This is a curve parameterised by polynomial functions in projective
coordinates. It is also called a \emph{rational curve} because it is
always possible to clear denominators of rational functions. All
motions in $\SE$ with only rational trajectories can be parameterised
by motion polynomials \cite{juettler93}.

\section{Rational curves as trajectories of rational motions}
\label{sec:rational-motions}

A rational parameterised equation $x = x_0 + x_1\qi + x_2\qj + x_3\qk$
with $x_0$, $x_1$, $x_2$, $x_3 \in \R[t]$ is called \emph{reduced} if
the greatest common divisor $g$ of $x_0$, $x_1$, $x_2$, $x_3$ has
degree zero. The degree of $x$ is defined as the maximal degree of
$x_0$, $x_1$, $x_2$, $x_3$. If $x$ is not reduced, we may divide it by
$g$ to obtain an equivalent parameterised equation which describes the
same rational curve, but possibly with fewer parameterisation
singularities.

Given a reduced parameterised equation $x$ of degree $d$, it is our
ultimate aim to find a motion polynomial $C = P + \eps Q \in \DH[t]$
of minimal degree such that the trajectory of one point $p$ is
parameterised by $x$. An obvious example of a motion polynomial with
trajectory $x$ is the curvilinear translation along $x$ and it will
turn out that this is already the solution to our problem in generic
cases. However, for trajectories which are non-generic, in a sense to
be made precise below, a lesser degree can be achieved.

\subsection{Circularity of trajectories}
\label{sec:circularity}

A motion polynomial $C = P + \eps Q$ is called \emph{reduced,} if both
$P$ and $Q$ have no common real factor. The real factor of the primal
part $P$ with maximal degree is uniquely defined up to a constant
scalar factor. We call it \emph{maximal real polynomial factor} and
abbreviate it by ``mrpf''. It accounts for a difference in degrees
between the rational motion, parameterized by $C = P + \eps Q$, and
its spherical component, parameterised by $P$. Hence, we also call the
degree of the maximal real polynomial factor of $P$ the
\emph{spherical degree defect} of~$C$.

We only consider reduced rational motions but do not exclude motions
with positive spherical degree defect. It will turn out that the
spherical degree defect and circularity of trajectories are closely
related:

\begin{definition}
  \label{def:1}
  The \emph{circularity} of a rational curve with reduced rational
  parameterised equation $x = x_0 + x_1\qi + x_2\qj + x_3 \qk$ is
  defined as
  \begin{equation*}
    \frac{1}{2} \deg \gcd (x_0,x_1^2 + x_2^2 + x_3^2).
  \end{equation*}
  The curve is called \emph{entirely circular} if it is of maximal
  circularity $\frac{1}{2}\deg x$.
\end{definition}

Geometrically, circularity is half the number of (conjugate complex)
intersection points of $x$ and the absolute circle at infinity,
counted with their respective algebraic multiplicities. Hence, it is
always an integer (this also follows algebraically from the fact that
$x_0$, $x_1$, $x_2$, $x_3$ are real and prime) and does not depend on
the chosen parameterisation, as long as it is reduced.

\begin{theorem}
  \label{th:1}
  Let $C$ be a reduced motion polynomial of degree $n$ and spherical
  degree defect $m$.  Then a trajectory of $C$ is of degree
  $d \leq 2n - m$ and circularity $c \geq (d-m)/2$.
\end{theorem}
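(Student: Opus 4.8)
The plan is to base the whole argument on the identity $Y = PZ$ for the trajectory, where $Z := x\cj P + 2x_0\cj Q$, together with the splitting $P = f\tilde P$ in which $f := \mrpf P$ has degree $m$ and $\tilde P$ has no real factor. Two elementary identities come first: since $P\cj Q + Q\cj P = 0$, $x + \cj x = 2x_0$, $x\cj x = \Norm x$, and the norm polynomial $N := P\cj P$ is real, hence central, a direct computation gives $\mathrm{Re}(Y) = x_0 N$ and $Y\cj Y = N\,\Norm Z$, whence $Y_1^2+Y_2^2+Y_3^2 = Y\cj Y - Y_0^2 = N(\Norm Z - x_0^2 N)$. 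The degree bound is then immediate: $f$ being real, $f\mid P$ and $f = \cj f\mid\cj P$, so $f\mid Y$ and $Y/f = \tilde P Z$ has degree at most $\deg\tilde P + \deg Z\le(n-m)+n = 2n-m$; the reduced parametrization $\tilde x$ of the trajectory is obtained from $Y$ by dividing out an even larger real factor, so $d = \deg\tilde x\le 2n-m$. (Trajectories that degenerate to a single point require no estimate.)

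For the circularity, the case $x_0 = 0$ is handled directly: then $Y = Px\cj P$ lies in the plane at infinity, $\gcd(\tilde x_0,\tilde x_1^2+\tilde x_2^2+\tilde x_3^2) = \tilde x_1^2+\tilde x_2^2+\tilde x_3^2$ has degree exactly $2d$ (a sum of squares of real polynomials has no cancelling leading term), and so $c = d\ge(d-m)/2$. Otherwise rescale $x$ so that $x_0 = 1$. As $Y_0 = N$ has degree $2n$ and $\deg Y\le 2n$, we have $\deg Y = 2n$; writing $Y = g\,\tilde x$ with $g := \mrpf Y$ gives $\deg g = 2n-d$, and $g = f\,r$ with $r := \mrpf(\tilde P Z)$ of degree $2n-d-m$. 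Since $\gcd(\tilde x_0,\tilde x_1^2+\tilde x_2^2+\tilde x_3^2) = \gcd(\tilde x_0,\Norm{\tilde x})$, substituting $\tilde x_0 = N/g$ and $\Norm{\tilde x} = Y\cj Y/g^2 = N\Norm Z/g^2$ yields
\[
  2c \;=\; \deg\gcd(\tilde x_0,\Norm{\tilde x}) \;=\; \deg N + \deg\gcd(g,\Norm Z) - 2\deg g \;=\; 2d - 2n + \deg\gcd(g,\Norm Z).
\]
Hence $c\ge(d-m)/2$ is equivalent to $\deg\gcd(g,\Norm Z)\ge\deg r$, and since $r\mid g$ it suffices to prove $r\mid\Norm Z$.

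This divisibility is the crux, and the step I expect to cost the most work. From $r\mid\tilde P Z$ and $r\mid\overline{\tilde P Z} = \cj Z\cj{\tilde P}$ one gets $r^2\mid(\tilde P Z)\overline{\tilde P Z} = \Norm{\tilde P}\,\Norm Z$, which already gives $v_\pi(\Norm Z)\ge v_\pi(r)$ for every irreducible $\pi$ not dividing $\Norm{\tilde P}$. For an irreducible $\pi$ dividing $\Norm{\tilde P}$ one argues locally at a root $\lambda$ of $\pi$; such $\lambda$ is non-real, since a real root would force $\tilde P(\lambda) = 0$ and hence a real factor of $\tilde P$. Under the identification $\H\otimes\C\cong M_2(\C)$ the decisive fact is that $\tilde P(\lambda)$ has rank exactly one: its determinant equals $\Norm{\tilde P}(\lambda) = 0$, yet it is nonzero because $\tilde P$ has no real factor. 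Therefore, over the discrete valuation ring $\C[[t-\lambda]]$, $\tilde P$ admits a Smith normal form $U\,\mathrm{diag}(1,(t-\lambda)^{j})\,V$ with $U,V$ invertible and $j = v_\lambda(\Norm{\tilde P})$ (the rank-one condition at $\lambda$ forces the invariant factors to be $1$ and $(t-\lambda)^{j}$); putting $M := VZ$, the relation $r\mid\tilde P Z = U\,\mathrm{diag}(1,(t-\lambda)^{j})\,M$ together with the invertibility of $U$ forces $v_\lambda(M_{11}),v_\lambda(M_{12})\ge v_\lambda(r)$. Since $\Norm Z = \det Z$ differs from $\det M = M_{11}M_{22}-M_{12}M_{21}$ only by the unit $\det V$, the trivial estimate $v_\lambda(M_{11}M_{22}-M_{12}M_{21})\ge\min(v_\lambda(M_{11}),v_\lambda(M_{12}))$ gives $v_\lambda(\Norm Z)\ge v_\lambda(r)$; as $r$ and $\Norm Z$ are real this passes to $\pi = (t-\lambda)(t-\cj\lambda)$, proving $r\mid\Norm Z$ and hence the theorem. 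The same displayed identity, read backwards, also pins down exactly when equality is attained, consistently with the geometric description of circularity as a count of intersections with the absolute circle.
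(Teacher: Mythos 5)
Your proof is correct, and it takes a genuinely different route from the paper's. The paper first translates the tracing point to the origin, writes the trajectory as $GP'\cj{P'}+2P'\cj{Q}$, identifies its real content as $N=\gcd(GP'\cj{P'},\mrpf(P'\cj{Q}))$, and then proves by induction on $\deg N$ — peeling off conjugate pairs of linear quaternion factors via its Lemma~1, which in turn rests on the factorisation theorem for quaternion polynomials — that $N$ divides both $P'\cj{P'}$ and $Q\cj{Q}$. You instead keep the general point, factor the trajectory as $Y=PZ$ with $Z=x\cj{P}+2x_0\cj{Q}$, and your valuation bookkeeping (which I checked prime by prime: $\min(\nu-\gamma,\nu+\zeta-2\gamma)=\nu-2\gamma+\min(\gamma,\zeta)$, so the displayed identity is indeed an equality) compresses the whole theorem into the single divisibility $\mrpf(\tilde{P}Z)\mid\Norm{Z}$. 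Your proof of that via $\H\otimes\C\cong M_2(\C)$ and the Smith normal form over the local ring at each root — where the absence of real factors of $\tilde{P}$ is exactly the statement that $\tilde{P}(\lambda)$ has rank one, forcing the first invariant factor to be a unit — is a clean replacement for the paper's induction and its three-way case analysis; note that your target is even slightly weaker than the paper's ($N\mid\Norm{P'}$ and $N\mid\Norm{Q}$ imply $N\mid\Norm{Z}$, not conversely), yet still suffices. What your approach buys: it is self-contained modulo standard linear algebra over a discrete valuation ring, it never invokes the reducedness of $C$ (so it proves a marginally stronger statement), it covers tracing points at infinity ($x_0=0$), which the paper's translation trick silently excludes, and the exact formula $2c=2d-2n+\deg\gcd(g,\Norm{Z})$ characterises when the bound is attained. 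What the paper's approach buys: it stays entirely within quaternion polynomial arithmetic and reuses the factorisation machinery (Lemma~1, the Euclidean algorithm) that the rest of the paper needs anyway, so no new tools are introduced.
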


\begin{remark}
  \label{rem:1}
  In \autoref{th:1}, the strictly less and strictly greater cases can
  occur. One example of this is the general Darboux motion
  \cite{SL,7R} where $n = 3$, $m = d = 2$, $c = 0$ and hence
  $2 = d < 2n-m = 4$ (but $0 = c = (d-m)/2$). Another example is the
  circular translation motion \cite{SL,7R} with $n = m = d = 2$,
  $c = 1$. Here $2 = d = 2n-m$ but $1 = c > (d-m)/2 = 0$.
\end{remark}

\begin{lemma}
  \label{lem:1}
  Assume $A,B\in\H[t]$. If a monic quadratic irreducible real
  polynomial $Q$ is a factor of the product $AB$, then $Q$ either
  divides $A$ or $Q$ divides $B$ or there is a unique quaternion
  $q\in\H$ and two quaternion polynomials $A_1$ and $B_1$ such that
  $Q=(t-q)(t-\cj{q})$, $A=A_1(t-q)$ and $B=(t-\cj{q})B_1$.
\end{lemma}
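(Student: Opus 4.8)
The plan is: if $Q$ divides $A$ or $Q$ divides $B$, the first or second alternative holds, so assume from now on that $Q$ divides neither, and construct $q$ explicitly. Fix $D \in \H[t]$ with $AB = QD$ (equivalently $AB = DQ$, since $Q$ is real and hence central in $\H[t]$). First I would show that $Q$ divides both norm polynomials: left-multiplying $AB = QD$ by $\cj{A}$ and using that $\cj{A}A = A\cj{A} \in \R[t]$, we get $(A\cj{A})B = Q\cj{A}D$, so $Q$ divides $(A\cj{A})B$; if $Q$ did not divide $A\cj{A}$ then, $Q$ being irreducible over $\R$, one could write $uQ + vA\cj{A} = 1$ with $u,v \in \R[t]$ and, using centrality of $Q$, conclude $Q \mid B$ — a contradiction. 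Hence $Q \mid A\cj{A}$, and symmetrically $Q \mid B\cj{B}$.

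Next I would extract a right linear factor of $A$ by division. Right division by the monic $Q$ gives $A = A'Q + (at+b)$ with $a,b \in \H$, and since $Q$ is central every term involving $A'Q$ vanishes modulo $Q$ in the product $A\cj{A}$, so $(at+b)\overline{(at+b)} \equiv A\cj{A} \equiv 0 \pmod{Q}$. The left-hand side is a real polynomial of degree at most $2$ divisible by $Q$, hence either zero — which forces $at+b = 0$ and so $Q \mid A$, excluded — or, by a degree count, $a \neq 0$ and $(at+b)\overline{(at+b)} = \Norm{a}\,Q$. In the latter case write $at+b = a(t-q)$ with $q = -a^{-1}b$; then $a(t-q)\overline{(t-q)}\,\cj{a} = (t-q)(t-\cj{q})\,\Norm{a}$ equals $\Norm{a}\,Q$, so $Q = (t-q)(t-\cj{q})$ (hence also $Q = (t-\cj{q})(t-q)$, and $q$ is a quaternion root of $Q$), and $A = A'(t-\cj{q})(t-q) + a(t-q) = \bigl(A'(t-\cj{q}) + a\bigr)(t-q) =: A_1(t-q)$.

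The crux — and the step I expect to be the main obstacle — is to show that $(t-\cj{q})$ is a \emph{left} factor of $B$, because the naive one-step argument fails when $Q$ divides the norm polynomials to higher order. I would prove it by induction on $\deg A$; when $\deg A = 1$ the factor $A_1$ is a unit and the first case below applies, so no separate base case is needed. Observe that $Q$ divides $A_1 \cdot (t-q)B = AB = QD$. If $Q$ divides $(t-q)B$, then substituting the left division $B = (t-\cj{q})B' + s$ (with $s \in \H$) gives $(t-q)B = QB' + (t-q)s$, so $Q$ divides $(t-q)s$; as $\deg\bigl((t-q)s\bigr) \le 1 < \deg Q$, this forces $s = 0$, i.e.\ $B = (t-\cj{q})B'$. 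If $Q$ does not divide $(t-q)B$, then $A_1$ is not a unit, so $\deg A_1 < \deg A$; moreover $Q$ does not divide $A_1$ (as $Q \mid A_1$ would give $Q \mid A$), so the induction hypothesis applied to $A_1$ and $(t-q)B$ yields $q_1$ with $Q = (t-q_1)(t-\cj{q_1})$, $A_1 = \tilde A_1(t-q_1)$, and $(t-\cj{q_1})$ a left factor of $(t-q)B$, say $(t-q)B = (t-\cj{q_1})C$. From $A = \tilde A_1(t-q_1)(t-q)$ and the hypothesis that $Q$ does not divide $A$ we get $q_1 \neq \cj{q}$ (otherwise $(t-q_1)(t-q) = Q$ and $Q \mid A$). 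Substituting $B = (t-\cj{q})B' + s$ into $(t-q)B = (t-\cj{q_1})C$ and using $Q = (t-\cj{q_1})(t-q_1)$ shows that $(t-\cj{q_1})$ is a left factor of $(t-q)s$; comparing leading and constant coefficients in $(t-q)s = (t-\cj{q_1})\lambda$ gives $\lambda = s$ and $\cj{q_1}s = qs$, whence $s = 0$ because $\cj{q_1} \neq q$. In either case $B = (t-\cj{q})B_1$.

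Finally, uniqueness: if $A = A_1(t-q) = A_1''(t-q'')$ with $Q = (t-q)(t-\cj{q}) = (t-q'')(t-\cj{q''})$, then $(t-q)$ and $(t-q'')$ are both right factors of $A$. From $A = A'Q + (at+b)$, and since for any quaternion root $h$ of $Q$ the term $A'Q$ contributes nothing to the remainder of right division of $A$ by $(t-h)$ (because $Q$ is central and $Q(h) = 0$), that remainder equals $ah + b$. Thus $aq + b = aq'' + b = 0$, and as $a \neq 0$ we get $q = q'' = -a^{-1}b$; this also shows that the $q$ constructed above is independent of all choices.
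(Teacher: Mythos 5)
Your proof is correct, and it takes a genuinely different route from the paper's. The paper's own argument invokes the factorisation theorem for quaternion polynomials \cite[Theorem~1]{hegedus13} to peel off, from $A$ on the right and from $B$ on the left, the chains of linear factors whose norm is $Q$, and then concludes from $Q\mid Q_AQ_B$ that the two innermost factors must be conjugate. You instead work entirely with Euclidean division and stay self-contained: the remainder $at+b$ of $A$ modulo the central polynomial $Q$ has norm $\Norm{a}\,Q$, which simultaneously produces the right factor $t-q$ of $A$ and the identity $Q=(t-q)(t-\cj{q})$; the left factor of $B$ is then extracted by an induction on $\deg A$, with the coefficient comparison in $(t-q)s=(t-\cj{q_1})\lambda$ killing the remainder $s$ because $q\neq\cj{q_1}$. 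Your uniqueness argument (the remainder of $A$ under right division by $t-h$, for any quaternion root $h$ of $Q$, equals $ah+b$, forcing $h=-a^{-1}b$) is more explicit than the paper's appeal to the uniqueness clause of the factorisation theorem. What your approach buys is independence from the external reference, at the cost of a somewhat longer induction; the paper's proof is shorter but its final step (``this can happen only if two neighbouring linear factors are conjugate'') in fact conceals an argument of essentially the nature you spell out. Two minor remarks, neither a gap: in case (ii) of your induction the conclusion $s=0$ actually contradicts the case hypothesis $Q\nmid(t-q)B$, so that case is vacuous and your ``in either case'' wording is logically sound but could be sharpened; and the identity $\cj{A}A=A\cj{A}$ for $A\in\H[t]$, which you use to show $Q\mid A\cj{A}$, deserves the one-line verification that each coefficient is the symmetric sum $\sum_{i+j=k}(a_i\cj{a_j}+a_j\cj{a_i})$-type expression and hence real and order-independent.
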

\begin{proof}
  It is sufficient to prove the third case under the assumption
  $\gcd(Q,A_0)=\gcd(Q,B_0)=1$ where $A_0 \coloneqq \mrpf A$,
  $B_0 \coloneqq \mrpf B$. There exist uniquely determined quaternions
  $q_1,\ldots,q_r$, $q'_1,\ldots,q'_s$ such that
  \begin{itemize}
  \item $Q = (t-q_i)(t-\cj{q_i}) = (t-q'_j)(t-\cj{q'_j})$ for
    $i \in \{1,\ldots,r\}$ and $j \in \{1,\ldots,s\}$,
  \item $A = A_0Q_A$, $B = B_0Q_B$ where
    $Q_A \coloneqq (t-q_r)\cdots(t-q_2)(t-q_1)$,
    $Q_B \coloneqq (t-q'_1)(t-q'_2)\cdots(t-q'_s)$, and
  \item $\gcd(A_0\cj{A_0},Q) = \gcd(B_0\cj{B_0},Q) = 1$.
  \end{itemize}
  This follows from the quaternion version of the factorisation
  theorem for motion polynomials \citep[Theorem~1]{hegedus13}.

  As $Q$ divides $AB$, we get that $Q$ divides
  $\cj{A_0}AAB\cj{B_0} = \cj{A_0}A_0Q_AQ_BB_0\cj{B_0}$. But then $Q$
  also divides $Q_AQ_B$. This can happen only if two neighbouring
  linear factors are conjugate and, because of $\gcd(Q,\mrpf A) =
  \gcd(Q,\mrpf B) = 1$, implies $q_1 = \cj{q'_1}$ whence
  the claim follows with $q \coloneqq q_1 = \cj{q'_1}$.
\end{proof}

\begin{proof}[Proof of \autoref{th:1}]
  Let $T \coloneqq 2x_0+\eps(x_1\qi+x_2\qj+x_3\qk)$ be the
  translation which moves a point $x= x_0 + x_1\qi + x_2\qj + x_3\qk$
  to the origin $1$ and let $\tilde{C}=CT^{-1}$. Then the orbit of $x$
  with respect to $C$ is equal to the orbit of the origin $1$ with
  respect to $\tilde{C}$.  Because we have $\deg(\tilde{C})=\deg(C)$
  and the spherical degree defect of~$C$ equals the spherical degree
  defect of~$\tilde{C}$, it is sufficient to prove the statement for
  $\tilde{C}$ and the origin $1$ (instead of $C$ and $x$). From now
  on, we use the symbol $C$ to abbreviate $\tilde{C}$. Let
  $C= P + \eps Q$, set $G \coloneqq \mrpf P$, $m \coloneqq \deg G$,
  $P' \coloneqq P/G$ and use \eqref{eq:3} to compute a parametric
  equation $y$ for the trajectory of the origin $1$:
  \begin{equation*}
      y 
      = P\cj{P} + 2P\cj{Q} 
      = G^2P'\cj{P'} + 2GP'\cj{Q} 
      \peq GP'\cj{P'} +  2P'\cj{Q},
  \end{equation*}
  where we write ``$\peq$'' for equality in the projective sense, modulo
  scalar (or real polynomial) multiplication.  The trajectory's degree
  is not larger than $2n-m$.

  With $N \coloneqq \gcd(GP'\cj{P'},\mrpf(P'\cj{Q}))$ and
  $r \coloneqq \deg N$, the degree of the trajectory $y$ is
  $d = 2n-m-r$ because we have $y\peq (GP'\cj{P'}+2P'\cj{Q})/{N}$.  If
  $P'\cj{P'}/N$ and $Q\cj{Q}/N$ are polynomials, the circularity is
  half the degree of
  \begin{equation*}
   \gcd\Bigl(\frac{GP'\cj{P'}}{N}, \frac{P'\cj{P'}Q\cj{Q}}{N^2}\Bigr) =
   \frac{P'\cj{P'}}{N}\gcd\Bigl(G,\frac{Q\cj{Q}}{N}\Bigr).
  \end{equation*}
  This would then imply that the circularity is not less than
  $n-m-r/2$, which equals $(d-m)/2$ for $d=2n-m-r$. Thus, we have to
  show that $N$ divides both, $P'\cj{P'}$ and $Q\cj{Q}$.
  
  Moreover, it is sufficient to prove the claim only for a reduced
  $C_0 \coloneqq P_0+\eps Q_0$ as the claim in the non-reduced case
  follows easily. Set
  $N_0 \coloneqq \gcd(G_0P'_0\cj{P'_0}, \mrpf P'_0\cj{Q_0})$ where
  $G_0 \coloneqq \mrpf P_0$ and $P'_0 \coloneqq P_0/G_0$. It is a
  useful fact that $N_0$ has no real linear factor because such a
  factor would also divide $G_0$ (because $P'_0\in\H[t]$ has no real
  factor and hence $P'_0\cj{P'_0}$ has no real linear factor either)
  and $Q_0$ (because it divides $P'_0\cj{Q_0}$ and $P'_0$ has no real
  factor). But this contradicts the reducedness of $C_0$.  Now we
  proceed by induction on the degree $s$ of $N_0$. Note that $s$ is
  zero or an even positive integer.
  
  In case of $s=0$, $N_0$ is a real constant and the claim is clear.

  Let $s \geq 2$. Then there is a monic quadratic irreducible real
  polynomial $M_1$ which divides $N_0$. Set $N_1 \coloneqq N_0/M_1$.
  We claim that there always exists a quaternion $q$ and two
  quaternion polynomials $P'_1$ and $Q_1$ such that
  $M_1=(t-q)(t-\cj{q})$, $P'_0=P'_1(t-q)$ and $Q_0=Q_1(t-q)$.  We
  handle the proof of this claim by distinguishing three cases:
  \begin{enumerate}
  \item $M_1$ divides $G_0$. Then $M_1$ can not divide $\mrpf Q_0$
    because of the reducedness of $C$.  Since $M_1$ is a divisor of
    $N_0$ which divides $P'_0\cj{Q_0}$, $M_1$ must divide
    $P'_0\cj{Q_0}$.  Since $P'_0$ has no real polynomial factor, by
    \autoref{lem:1}, there is a unique quaternion $q$ and two polynomials
    $P'_1$ and $Q_1$ such that $M_1=(t-q)(t-\cj{q})$, $P'_0=P'_1(t-q)$
    and $Q_0=Q_1(t-q)$.
  \item $M_1$ does not divide $G_0$ but divides $\mrpf Q_0$.  Then
    $M_1$ must divide $P'_0\cj{P'_0}$.  Since $P'_0$ has no real
    polynomial factor, by \autoref{lem:1}, there is a unique
    quaternion $q$ such that $M_1=(t-q)(t-\cj{q})$, $P'_0=P'_1(t-q)$.
        Then we set $Q_1 \coloneqq Q_0(t-\cj{q})/M_1$ which is a
    polynomial and satisfies $Q_0 = Q_1(t-q)$.
  \item $M_1$ divides neither $G_0$ nor $\mrpf Q_0$. By
    \autoref{lem:1}, there is a unique quaternion $q$ and two
    polynomials $P'_1$ and $Q_1$ such that $M_1=(t-q)(t-\cj{q})$,
    $P'_0=P'_1(t-q)$ and $Q_0=Q_1(t-q)$.
  \end{enumerate}
  In all three cases, we have a new reduced motion polynomial
  $C_1 \coloneqq G_0P'_1+\eps Q_1$, such that
  \begin{equation*}
    P'_0=P'_1(t-q),\quad
    Q_0=Q_1(t-q),\quad
    \gcd(G_0P'_1\cj{P'_1}, \mrpf(P'_1\cj{Q_1})) = N_0/M_1 \eqqcolon N_1.
  \end{equation*}
  By induction hypothesis, $N_1$ divides $P'_1\cj{P'_1}$ and
  $Q_1\cj{Q_1}$.  By the derivation of $P'_1$ and $Q_1$, namely,
  $P'_0=P'_1(t-q)$ and $Q_0=Q_1(t-q)$, we have that $N_0$ divides
  $P'_0\cj{P'_0}$ and $Q_0\cj{Q_0}$.  This concludes the induction
  proof and also the proof of the theorem.
\end{proof}
   
\begin{remark}
  Calling a rational motion \emph{generic} if the primal part has no
  real factors one interpretation of \autoref{th:1} is as follows: A
  trajectory of a generic rational motion can be entirely circular
  (actually, only few exceptions are known). A similar statement for
  algebraic planar motions can be found in
  \cite[Ch.~XI]{bottema90}. There, also a more detailed discussion on
  the circularity of trajectories can be found. For planar rational
  curves, the geometric circularity conditions there imply the
  algebraic circularity conditions of \autoref{th:1}.
\end{remark}

As one consequence of \autoref{th:1}, among all rational curves of
fixed degree, curves of high circularity can be generated by rational
motions of low degree. In particular, we obtain the desired bound on
the degree of a rational motion with a given rational trajectory:

\begin{corollary}
  \label{cor:1}
  If a rational curve of circularity $c$ and degree $d$ is a
  trajectory of the rational motion $C = P + \eps Q$, the degree of
  $C$ is not less than $d-c$. If it is of degree $d-c$, the degree defect
  of the spherical motion component equals $d-2c$.
\end{corollary}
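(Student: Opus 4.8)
The plan is to read off \autoref{cor:1} from the two inequalities of \autoref{th:1} by elementary arithmetic, after reducing to the case of a reduced motion polynomial.

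First I would dispose of the non-reduced case. If $C = P + \eps Q$ is not reduced, I would write $C = fC_0$ with $f \in \R[t]$ of positive degree and $C_0$ reduced; one checks that $C_0$ is again a motion polynomial and that, by \eqref{eq:3}, it has the same trajectories as $C$. Since $\deg C_0 = \deg C - \deg f < \deg C$, any lower bound proved for $\deg C_0$ is a fortiori a lower bound for $\deg C$, so the first assertion follows once it is established for reduced motion polynomials. For the second assertion I would note that if $C$ itself has degree $d - c$, then the first assertion applied to $C_0$ gives $\deg C_0 \ge d - c = \deg C \ge \deg C_0$, forcing $\deg f = 0$; hence $C$ was already reduced and it suffices to argue in the reduced case.

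So let $C$ be reduced of degree $n$ and spherical degree defect $m$. \autoref{th:1} gives $d \le 2n - m$ and $c \ge (d-m)/2$, i.e. $2n \ge d + m$ and $m \ge d - 2c$. Combining these, $2n \ge d + m \ge d + (d - 2c) = 2(d-c)$, so $n \ge d - c$, which is the first claim. If moreover $n = d - c$, then the chain $2(d-c) = 2n \ge d + m \ge 2(d-c)$ collapses to a chain of equalities, whence $m = d - 2c$ (and, incidentally, $d = 2n - m$, so the trajectory realises the maximal degree allowed by \autoref{th:1}). This is exactly the second claim.

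I do not anticipate a genuine obstacle; the core argument is a one-line substitution into \autoref{th:1}. The only points that need care are that $d$ and $c$ in the statement must refer to a reduced parametric equation of the trajectory, so that \autoref{th:1} applies verbatim, and that the passage from $C$ to its reduced companion $C_0$ genuinely preserves all trajectories while only lowering the degree.
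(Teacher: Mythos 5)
Your proposal is correct and is exactly the argument the paper intends: the paper states \autoref{cor:1} as an immediate consequence of \autoref{th:1}, and your arithmetic ($2n \ge d+m$ and $m \ge d-2c$ combining to $n \ge d-c$, with equality forcing $m = d-2c$) is the derivation being left to the reader. The extra care about the non-reduced case is harmless and consistent with the paper's convention of only considering reduced motions.
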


We will see below in \autoref{sec:motion-synthesis} that the bound of
\autoref{cor:1} is sharp.

\begin{example}
  Let us illustrate above results by simple examples from
  literature. In case of $\deg C = 1$, the only generic rational
  motions are rotations with fixed axis. Their generic trajectories
  (trajectories that maximise the degree among all trajectories) are
  circles and, of course, entirely circular. Rational motions with
  $\deg C = 2$ are generated by reflecting a moving frame in one
  family of rulings on a quadric $H$, see \cite{hamann11}. The generic
  case is obtained if $H$ is a hyperboloid. In this case, generic
  trajectories are of degree four. Their entirely circularity has
  already been observed in \cite[Ch.\;IX,~\S7]{bottema90}. The
  non-generic trajectories are circles. Points with non-generic
  trajectories lie on two skew lines that coincide if $H$ is a
  hyperboloid of revolution. Finally, if $H$ is a hyperbolic
  paraboloid, generic trajectories are just of degree three and
  circularity one \cite{hamann11}.
\end{example}

\subsection{Motion synthesis}
\label{sec:motion-synthesis}

Now we turn to the task of computing a rational motion
$C = P + \eps Q$ of minimal degree $\deg C = n$ with a given rational
parameterised curve $x = x_0 + x_1\qi + x_2\qj + x_3\qk$ as
trajectory. Denote the degree of the trajectory by $d$ and its
circularity by $c$. By \autoref{cor:1} we have $n \ge d - c$. We will
prove that, up to coordinate changes, exactly one solution of minimal
degree $n = d - c$ exists and we also provide a procedure for its
computation.

\begin{theorem}
  \label{th:2}
  Let $x = x_0 + x_1\qi + x_2\qj + x_3\qk$ be a reduced rational
  parametric equation of a rational curve such that $x(\infty) \peq
  1$. Then there exists a unique monic rational motion polynomial
  $C = P + \eps Q$ of minimal degree, such that $C(\infty) = 1$ and
  the trajectory of $1$ is parameterised by~$x$.
\end{theorem}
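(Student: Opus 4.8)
Throughout set $g \coloneqq \gcd(x_0,\,x_1^2+x_2^2+x_3^2)$, so that $\deg g = 2c$ by \autoref{def:1}, put $\hat x_0 \coloneqq x_0/g$ and $\mathbf x \coloneqq x_1\qi+x_2\qj+x_3\qk$, and fix the representative of $x$ with $x_0$ (hence $g$, hence $\hat x_0$) monic; the hypothesis $x(\infty)\peq 1$ is compatible with this and moreover forces $\deg\mathbf x<\deg x_0=d$. First I would note that a motion polynomial with trajectory $\peq x$ of minimal degree must be \emph{reduced}, since cancelling a common real factor of $P$ and $Q$ leaves the trajectory unchanged and lowers the degree. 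For any such $C=P+\eps Q$ the motion condition makes $P\cj Q$ a vector, so comparing real and vector parts in $P\cj P+2P\cj Q\peq x$ gives $P\cj P=\lambda x_0$ and $2P\cj Q=\lambda\mathbf x$ for some $\lambda\in\R[t]$; passing to norms and using that the norm is multiplicative yields $\lambda(x_1^2+x_2^2+x_3^2)=4x_0\,Q\cj Q$, and since $x_0/g$ and $(x_1^2+x_2^2+x_3^2)/g$ are coprime this forces $\hat x_0\mid\lambda$. Because reducedness of $x$ prevents $g$ from having any real root, $g$ is positive, so positivity of the norm polynomial $P\cj P$ forces $\lambda$ to be $\hat x_0$ times a positive constant; a degree count then gives $\deg P\ge d-c$ (in agreement with \autoref{cor:1}), and when equality holds $\lambda=\hat x_0$ and
\begin{equation*}
  P\cj P=x_0\hat x_0,\qquad 2P\cj Q=\hat x_0\mathbf x .
\end{equation*}
The second identity recovers $Q$ from $P$, so the problem reduces to determining the primal part $P$.

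For existence I would split off the real content of $\mathbf x$, writing $\mathbf x=e\,\mathbf x'$ with $e\coloneqq\gcd(x_1,x_2,x_3)\in\R[t]$ and $\mathbf x'\in\H[t]$ without real factors; reducedness of $x$ gives $\gcd(\hat x_0,e)=\gcd(g,e)=1$, and from $e^2\,\mathbf x'\cj{\mathbf x'}=g\cdot\bigl((x_1^2+x_2^2+x_3^2)/g\bigr)$ one gets $g\mid\mathbf x'\cj{\mathbf x'}$. Factoring $g=M_1\cdots M_c$ into monic irreducible quadratics, I would peel monic linear factors $L_1,\dots,L_c$ with $\Vert L_i\Vert=M_i$ off the left of $\mathbf x'$ one at a time; each step is a greatest‑common‑divisor computation in $\H[t]$ (the promised variant of the Euclidean algorithm, and essentially the mechanism of \autoref{lem:1} / \citep[Theorem~1]{hegedus13}), the point being that the complementary factor again has no real factor. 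Writing $\mathbf x'=L_1\cdots L_c F$ and $P_1\coloneqq L_1\cdots L_c$ (monic, with $\Vert P_1\Vert=g$), I would set $P\coloneqq\hat x_0P_1$ and $Q\coloneqq\tfrac12\,e\,\cj F$, and then verify by direct computation that $\mrpf P=\hat x_0$, $\Vert P\Vert=x_0\hat x_0$, $P\cj Q+Q\cj P=0$, $\deg Q<\deg P=d-c$, $C(\infty)=1$, $C$ is reduced, and the trajectory of $1$ is $\hat x_0(x_0+\mathbf x)\peq x$. By the first paragraph this $C$ has minimal degree.

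For uniqueness let $C=P+\eps Q$ be an arbitrary monic minimal solution; it is reduced, and by \autoref{cor:1} its spherical degree defect equals $d-2c=\deg\hat x_0$, i.e.\ $\deg\mrpf P=\deg\hat x_0$. The identity $2P\cj Q=\hat x_0 e\,\mathbf x'$ exhibits $P$ as a left divisor of $\hat x_0 e\,\mathbf x'$, so $\mrpf P$ divides $\hat x_0 e$; splitting it off the coprime parts $\hat x_0$ and $e$ and using that $e$ divides the numerator of $Q$, reducedness of $C$ kills the $e$‑part, and with the degree equality this pins down $\mrpf P=\hat x_0$. Cancelling the central factors $\hat x_0$ and $e$ then shows that $P_1\coloneqq P/\hat x_0$ is a monic, real‑factor‑free left divisor of $\mathbf x'$ with $\Vert P_1\Vert=g$. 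Finally, such a divisor is unique: a polynomial in $\H[t]$ without real factors has, for each monic divisor of its (automatically real‑root‑free) norm polynomial, exactly one monic left divisor of that norm — a fact proved by the same $\gcd$ argument used for the peeling, iterated, and which is the uniqueness content behind \autoref{lem:1}. Hence $P_1$, and therefore $P$ and $C$, is uniquely determined.

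The crux, and the step I expect to be the real obstacle, is precisely this factorisation statement: that a real‑factor‑free quaternion polynomial has exactly one monic one‑sided divisor of each prescribed norm. Its existence half needs the careful peeling argument together with the observation that the property of having no real factor is inherited by the complementary factor, and its uniqueness half is where reducedness of $C$ and \autoref{lem:1} are indispensable — for the non‑reduced polynomial $\hat x_0\mathbf x$ itself the one‑sided divisors of norm $x_0\hat x_0$ are wildly non‑unique, so the passage from $\hat x_0\mathbf x$ to its real‑factor‑free part $\mathbf x'$, which is justified only via \autoref{cor:1} and reducedness, is essential. The surrounding degree and norm bookkeeping in the first two steps is routine but must be carried out precisely in order to force $\lambda=\hat x_0$ and $\mrpf P=\hat x_0$.
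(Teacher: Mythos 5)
Your argument is correct and follows essentially the same route as the paper: the decomposition $x_0 = g\hat x_0$ (the paper's $x_0 = gw$), the extraction via the quaternionic Euclidean algorithm of a monic left factor of the vector part with prescribed norm $g$, and the reduction of uniqueness to the uniqueness of that normed left divisor are precisely \autoref{lem:3} and the paper's proof of \autoref{th:2}, and you correctly single out that factorisation statement as the crux. The one step to tighten is your claim that ``reducedness of $C$ kills the $e$-part'' of $\mrpf P$ via $e$ dividing $Q$: the cleaner reason is that $(\mrpf P)^2$ divides $P\cj{P} = \hat x_0 x_0$ while $\gcd(e, x_0) = 1$, so no irreducible factor of $e$ can divide $\mrpf P$ in the first place.
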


A key ingredient in our proof of \autoref{th:2} is a version of the
Euclidean algorithm to compute the \emph{left gcd} of two polynomials
$F,G \in \H[t]$. This is a well-known concept in the theory of
polynomials over rings, see \cite{ore33}. Call the polynomial
$L \in \H[t]$ a \emph{left factor} or \emph{left divisor} of $F$ if
there exists $Q \in \H[t]$ such that $F = LQ$. The polynomial
$D \in \H[t]$ is called \emph{left gcd} of $F$ and $G$, if $D$ is a
left divisor of $F$ and $G$ and any left divisor $E$ of $F$ and $G$
also left divides $D$. The left gcd is unique up to right
multiplication with a non-zero quaternion.

The Euclidean algorithm in this context is based on polynomial
\emph{right} division. Given $R_0,R_1 \in \H[t]$, there exist
$Q_2,R_2 \in \H[t]$ such that $R_0 = R_1Q_2 + R_2$ and
$\deg R_2 < \deg R_1$. Note that the order of factors in the product
$R_1Q_2$ is important. If $D$ is a left divisor of $R_0$ and $R_1$,
then $D$ is also a left divisor of $R_1Q_2$ and of $R_2$. Conversely,
a left divisor of $R_2$ and $R_1$ also left divides $R_0$. Hence the
left gcd of $R_0$ and $R_1$ equals the left gcd of $R_1$ and
$R_2$. Assuming $\deg R_0 \ge \deg R_1$, we also have
$\deg R_0 > \deg R_2$ and we may recursively define sequences
$R_2,R_3,\ldots$ and $Q_2,Q_3,\ldots$ of polynomials with strictly
decreasing degree by
\begin{equation*}
  R_{k-2} = R_{k-1}Q_k + R_k,\quad k \ge 2.
\end{equation*}
The recursion ends as soon as $R_k = 0$ whence the left gcd is
$D = R_{k-1}$. Using polynomial long division, an algorithmic
implementation of this algorithm is straightforward.

\begin{lemma}
  \label{lem:2}
  If $C \in \H[t]$ and $M \in \R[t]$ is an irreducible (over $\R$)
  quadratic factor of $C\cj{C}$, then $M$ and $C$ have a left gcd of
  positive degree.
\end{lemma}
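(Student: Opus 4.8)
The plan is to reduce $C$ modulo $M$ by right division and to read off a common left factor from the remainder. Since $M\in\R[t]$ is central in $\H[t]$, I would first write $C=MS+R$ with $S,R\in\H[t]$ and $\deg R\le 1$; after rescaling, assume $M$ monic. Conjugating (and using $\cj{M}=M$) gives $\cj{C}=\cj{S}M+\cj{R}$, hence
\begin{equation*}
  C\cj{C}=MS\cj{S}M+MS\cj{R}+R\cj{S}M+R\cj{R},
\end{equation*}
so $C\cj{C}-R\cj{R}$ is divisible by $M$ in $\H[t]$; being a real polynomial, it is divisible by $M$ already in $\R[t]$. As $M\mid C\cj{C}$ by hypothesis, this yields $M\mid R\cj{R}$.

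Next I would run a degree argument on the remainder. Writing $R=bt+a$ with $a,b\in\H$, one has
\begin{equation*}
  R\cj{R}=\Norm{b}\,t^2+(a\cj{b}+b\cj{a})\,t+\Norm{a}\in\R[t],
\end{equation*}
a polynomial of degree two when $b\neq 0$ and of degree at most zero otherwise. A quadratic $M$ can divide such a polynomial only if it is the zero polynomial or of degree exactly two. In the first case $\Norm{a}=\Norm{b}=0$, hence $a=b=0$ because the quaternion norm is positive definite; then $R=0$, so $M$ left-divides $C$ and the left gcd of $M$ and $C$ has degree two. In the second case $b\neq 0$ and $R\cj{R}=\lambda M$ with $\lambda=\Norm{b}>0$.

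It remains to treat the case $b\neq 0$. Here I would factor $R=(t-p)\,b$ with $p\coloneqq -ab^{-1}\in\H$. Then
\begin{equation*}
  R\cj{R}=(t-p)\,b\cj{b}\,(t-\cj{p})=\Norm{b}\,(t-p)(t-\cj{p}),
\end{equation*}
and, comparing with $R\cj{R}=\lambda M=\Norm{b}\,M$, we get $M=(t-p)(t-\cj{p})$. Thus $t-p$ is a left divisor of $M$; it is a left divisor of $R$ by construction; and it is a left divisor of $C=(t-p)(t-\cj{p})\,S+(t-p)\,b=(t-p)\bigl((t-\cj{p})\,S+b\bigr)$. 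Hence $t-p$ is a common left divisor of $M$ and $C$, so every left gcd of $M$ and $C$ has degree at least one, as claimed.

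The only delicate point I anticipate is the case analysis on the remainder: the argument must exclude a nonzero \emph{constant} remainder, and this is exactly where positive definiteness of the quaternion norm is indispensable — without it, $R\cj{R}$ could be a nonzero real constant and the divisibility $M\mid R\cj{R}$ would be vacuously impossible only for the wrong reason. Everything else — the centrality of $M$, the identity $(t-p)(t-\cj{p})\in\R[t]$, and the tracking of left factors through the division identity — is routine bookkeeping.
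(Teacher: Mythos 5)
Your proposal is correct and follows essentially the same route as the paper's proof: right division $C=MS+R$ with $\deg R\le 1$, the identity showing $M\mid R\cj{R}$, and then reading off a common left factor from $R$ (the paper uses $R$ itself, you use the associate $t-p$ with $R=(t-p)b$). Your explicit case analysis ruling out a nonzero constant remainder via positive definiteness of the norm is a detail the paper leaves implicit, but the argument is the same.
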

\begin{proof}
  Use polynomial right division to compute $Q,R \in \H[t]$ with $C =
  MQ + R = QM + R$ and $\deg R \le 1$. Then
  \begin{equation*}
    C\cj{C} = (MQ+R)(M\cj{Q}+\cj{R})
            = M(MQ\cj{Q} + Q\cj{R}+R\cj{Q}) + R\cj{R}.
  \end{equation*}
  If $R = 0$, then $M$ itself is a left gcd of $C$ and $M$. Otherwise,
  $M$ is a left factor of $R\cj{R}$ which implies $M \peq
  R\cj{R}$. Hence, $R$ is a left factor of $M$ and also of~$C$.
\end{proof}

\begin{lemma}
  \label{lem:3}
  For any polynomial $C \in \H[t]$ of positive degree and a monic divisor
  $R \in \R[t]$ of $C\cj{C}$ with $\gcd(R,\mrpf C) = 1$, there exists
  a unique monic quaternion polynomial $P$ (the left gcd of $C$ and
  $R$) and a unique quaternion polynomial $Q$ such that $PQ=C$ and
  $P\cj{P}=R$.
\end{lemma}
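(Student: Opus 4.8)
The plan is to split the claim into \emph{existence} of a pair $(P,Q)$ with $P$ monic, $PQ=C$ and $P\cj P=R$, and \emph{uniqueness}; I would obtain uniqueness (and at the same time the parenthetical identification) by showing that \emph{any} such $P$ is forced to be the monic left gcd of $C$ and $R$, after which $Q$ is determined by $PQ=C$.

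For existence I would induct on $\deg R$. Because $\gcd(R,\mrpf C)=1$, the polynomial $R$ has no real linear factor: a real root $\lambda$ of $R$ is a root of $C\cj C$, so $C(\lambda)=0$ as the quaternion norm is positive definite, and then $t-\lambda$ divides $C$ and hence $\mrpf C$. Thus $R$ is a product of monic irreducible real quadratics and $\deg R$ is even. If $\deg R=0$ take $P=1$, $Q=C$. If $\deg R\ge 2$, pick a monic irreducible real quadratic factor $M$ of $R$ and let $P_1$ be the monic left gcd of $M$ and $C$. By \autoref{lem:2} it has positive degree; it cannot have degree $2$, for then $M$ would left divide $C$ and thus divide $\mrpf C$, against $\gcd(R,\mrpf C)=1$; hence $\deg P_1=1$. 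Writing $M=P_1P_1'$ with $P_1'$ monic linear and using $M=\cj M=\cj{P_1'}\cj{P_1}$, a short computation gives $P_1'=\cj{P_1}$, i.e.\ $P_1\cj{P_1}=M$. Now $C=P_1C_1$ for a unique $C_1$, and $C\cj C=(C_1\cj{C_1})P_1\cj{P_1}=M\,C_1\cj{C_1}$ (the real polynomial $C_1\cj{C_1}$ is central), so $R/M\mid C_1\cj{C_1}$; also $\mrpf C_1\mid\mrpf C$, whence $\gcd(R/M,\mrpf C_1)=1$. The induction hypothesis supplies monic $P_2$ and a polynomial $Q$ with $P_2Q=C_1$ and $P_2\cj{P_2}=R/M$, and then $P\coloneqq P_1P_2$ is monic with $PQ=C$ and $P\cj P=P_1(R/M)\cj{P_1}=(R/M)M=R$.

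For uniqueness, let $(P,Q)$ be any pair with $P$ monic, $PQ=C$, $P\cj P=R$. Then $P$ left divides $C$ and also $R=P\cj P$, so it left divides the monic left gcd $D$ of $C$ and $R$; write $D=PU$. Since $D$ and $P$ are monic, $D=P$ precisely when $U$ is a constant unit. As $D$ generates the right ideal $C\,\H[t]+R\,\H[t]$, this holds iff $P\in C\,\H[t]+R\,\H[t]$; writing $P=C\mu+R\nu$, substituting $C=PQ$ and $R=P\cj P$, and cancelling $P$ on the left ($\H[t]$ is a domain), I reduce to $1\in Q\,\H[t]+\cj{P}\,\H[t]$, that is, to $Q$ and $\cj P$ being left coprime. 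So uniqueness comes down entirely to this coprimality, which is the step I expect to be the main obstacle.

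To establish it, suppose $Q$ and $\cj P$ have a common left divisor of positive degree and let $E$ be their monic left gcd. From $Q=EQ'$ and $\cj P=EP''$ one gets $P=\cj{P''}\cj E$, so $\cj E$ right divides $P$, say $P=P^\dagger\cj E$. Then $C=PQ=P^\dagger(\cj EE)Q'$ and $R=P\cj P=P^\dagger(\cj EE)P''$, where $N\coloneqq\cj EE$ has self-conjugate coefficients, hence lies in $\R[t]$, and $\deg N=2\deg E\ge 2$. Consequently $N$ is a real factor of $C$, so $N\mid\mrpf C$, and $N$ is also a factor of $R$; this contradicts $\gcd(R,\mrpf C)=1$. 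Apart from this step the argument is routine, provided one keeps track of the order of factors and uses the centrality of real polynomials in $\H[t]$; the only external inputs are \autoref{lem:2} and the one-sided principal-ideal structure of $\H[t]$.
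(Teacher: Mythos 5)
Your proof is correct, but it follows a genuinely different route from the paper's on both halves of the claim. For existence, the paper runs the Euclidean algorithm once on $C$ and the full divisor $R$ to get the monic left gcd $P_0$ with $P_0Q=C$ and $P_0P_1=R$, and then must show that the cofactor $P_1$ has no real factor --- a somewhat delicate step that applies \autoref{lem:2} to $Q$ and derives a contradiction with the maximality of the left gcd --- before concluding $P_1=\cj{P_0}$ from $\cj{P_0}R=\cj{P_0}P_0P_1$. You instead induct on $\deg R$, peeling off one irreducible real quadratic factor $M$ at a time; there the only norm computation needed is the elementary fact that a monic linear left divisor $P_1$ of $M$ forces $M=P_1\cj{P_1}$, and the degree-$2$ alternative for the left gcd of $M$ and $C$ is excluded directly by $\gcd(R,\mrpf C)=1$. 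For uniqueness, the paper only asserts that any admissible $P$ ``is necessarily the unique monic left gcd of $C$ and $R$'' without argument, whereas you actually prove it: using $D\,\H[t]=C\,\H[t]+R\,\H[t]$ you reduce $D=P$ to the left coprimality of $Q$ and $\cj{P}$, and rule out a common left factor $E$ of positive degree because $\cj{E}E$ would then be a real divisor of both $\mrpf C$ and $R$. Your version is thus slightly longer on existence but more complete on uniqueness; the paper's is more directly algorithmic (a single gcd computation, which is also what its examples use). The only points worth tightening are the trivial edge case $\deg C_1=0$ in your induction (where $R/M$ is forced to be constant, so the hypothesis ``positive degree'' in the induction statement is harmless) and an explicit final sentence that uniqueness of $Q$ follows from $PQ=C$ by left cancellation.
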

\begin{proof}
  The polynomial $R$ has no real linear factor because such a factor
  necessarily is a divisor of $C$ which contradicts
  $\gcd(R, \mrpf C) = 1$.  Using the Euclidean algorithm, we can
  compute the unique monic left gcd $P_0$ of $C$ and $R$, i.e.,
  \begin{equation}
    \label{eq:4}
    P_0Q=C,\quad P_0P_1=R
  \end{equation}
  with $Q,P_1 \in \H[t]$. We claim that $P_0$ and $P_1$ both have no
  real polynomial factor:
  \begin{itemize}
  \item A real polynomial factor of $P_0$ is also a real polynomial
    factor of $C$ and $R$ and contradicts $\gcd(R, \mrpf C) = 1$.
  \item A real polynomial factor of $P_1$ must have a real quadratic
    factor $M$ which is irreducible over $\R$ (because $R$ has no real
    linear factor). That is, for some $P'_1 \in \H[t]$ we have
    $P_1=P'_1M$. We can write $R=P_0P_1=P_0P'_1M=R'M$ where $R'$ is
    real. From this we infer $\cj{P_0}R'=\cj{P_0}P_0P'_1$. Because
    $\cj{P_0}$ has no real polynomial factor, the real polynomial
    $\cj{P_0}P_0$ divides $R'$ and $\cj{P_0}P_0M$ divides $R$ but also
    $C\cj{C}=Q\cj{Q}P_0\cj{P_0}$ (a multiple of~$R$). This implies
    that $M$ divides $Q\cj{Q}$ and, by \autoref{lem:2}, $Q$ and $M$
    have a left gcd $P_2$ with $\deg P_2 > 0$. But then, by
    \eqref{eq:4}, $P_0P_2$ is a left divisor of $C$ and $R$ and
    $\deg P_0P_2 > \deg P_2$. This contradicts the fact that $P_0$ is
    left gcd of $C$ and~$R$.
  \end{itemize}
  Now we claim that $P_1=\cj{P_0}$. Left multiplying the second
  equation in \eqref{eq:4} with $\cj{P_0}$ we obtain
  $\cj{P_0}R=\cj{P_0}P_0P_1$. We have a real polynomial factor $R$ on
  the left and a real polynomial factor $\cj{P_0}P_0$ on the
  right. Because neither $P_0$ nor $P_1$ have real polynomial factors,
  we have $R = \cj{P_0}P_0$ and $P_1=\cj{P_0}$ follows. Thus, with
  $P \coloneqq P_0$, we have existence.

  As to uniqueness, we observe that $P$ is necessarily the unique
  monic left gcd of $C$ and $R$. Uniqueness of $Q$ follows from
  uniqueness of~$P$.
\end{proof}

\begin{proof}[Proof of \autoref{th:2}]
  Denote the degree of the rational parametric equation $x$ by $d$ and
  its circularity by $c$. The condition $x(\infty) \peq 1$ implies
  $\deg x_0 > \deg x_i$ for $i=1,2,3$ and it is no loss of generality
  to assume that $x_0$ is monic. There exists a monic polynomial $g$
  of degree $2c$ and two relatively prime polynomials $w$ (monic),
  $y \in \R[t]$ with $\deg w = d - 2c$, $\deg y < 2(d - c)$, such that
  \begin{equation}
    \label{eq:5}
    x_0 = g w,\quad x_1^2 + x_2^2 + x_3^2 = gy.
  \end{equation}
  Denote by $C \coloneqq P + \eps Q$ a (yet unknown) motion polynomial
  of minimal degree that parameterises the sought rational motion. The
  parametric trajectory of $1$ equals $x$ if
  \begin{equation}
    \label{eq:6}
    x_0 + x_1\qi + x_2\qj + x_3\qk \peq P\cj{P} + 2P\cj{Q}
    \quad\text{where}\quad
    P\cj{Q} + Q\cj{P} = 0.
  \end{equation}
  With $D \coloneqq x_1\qi + x_2\qj + x_3\qk$, we have
  \begin{equation}
    \label{eq:7}
    D\cj{D} = x_1^2 + x_2^2 + x_3^2 = gy
    \quad\text{and}\quad
    wD\cj{D} = wgy = x_0y.
  \end{equation}
  By \autoref{lem:3}, there exist polynomials $P_0 \in \H[t]$ and
  $Q_0 \in \H[t]$ such that
  \begin{equation*}
    P_0\cj{P_0} = g, \quad
    P_0\cj{Q_0} = D.    
  \end{equation*}
  With $P \coloneqq wP_0$ and $Q \coloneqq Q_0/2$ we then have
  \begin{equation*}
    P\cj{P} = w^2P_0\cj{P_0} = w^2 g = wx_0, \quad
    2P\cj{Q} = wP_0\cj{Q_0}=wD = w(x_1\qi + x_2\qj + x_3\qk).
  \end{equation*}
  This shows that $P$ and $Q$ solve the first of the two equations in
  \eqref{eq:6}. The second equation follows from the vanishing of the
  scalar part of $D$:
  $0 = D + \cj{D} = wD + w\cj{D} = 2(P\cj{Q} + Q\cj{P})$. Thus, the
  polynomial $C = P + \eps Q$ with $P$, $Q$ computed as above is one
  solution to our problem and it is of minimal
  degree~$n \coloneqq d - c$.
  
  Conversely, assume that $C'=P'+\eps Q'$ is another solution of
  minimal degree $\deg C' = n$. Again, we have $\deg P' > \deg Q'$ and
  $P'$ is monic. By the first of the two equations in \eqref{eq:6} we
  get
  \begin{equation}
    \label{eq:8}
    P'\cj{P'} = w'x_0, \quad \quad
    2P'\cj{Q'} = w'D = w'(x_1\qi + x_2\qj + x_3\qk),
  \end{equation}
  for some monic real polynomial $w'$ of degree $d-2c$. By
  \eqref{eq:5}, \eqref{eq:7}, and \eqref{eq:8} we get
  \begin{equation*}
    gy=D\cj{D}=4\frac{P'\cj{P'}Q'\cj{Q'}}{w'^2}=4\frac{w'x_0Q'\cj{Q'}}{w'^2}
    =4\frac{x_0}{w'}Q'\cj{Q'}=4\frac{gw}{w'}Q'\cj{Q'}.
  \end{equation*}
  If we divide both sides by $g$ and multiply with $w'$, we get
  \begin{equation*}
    yw'=4wQ'\cj{Q'}.
  \end{equation*}
  As $w$ and $y$ are relatively prime, we get that $w$ divides
  $w'$. But $\deg w = \deg w'$ and $w$ and $w'$ are monic. Thus, we have
  \begin{equation}
    \label{eq:9}
    w=w'\quad \text{and} \quad y=4Q'\cj{Q'}.
  \end{equation}
  Left-multiplying the second of two equations in \eqref{eq:8} by
  $Q'$ and using \eqref{eq:9}, we get
  \begin{equation*}
    P'=w'\frac{DQ'}{2Q'\cj{Q'}}=w\frac{2DQ'}{y}.
  \end{equation*}
  As $w$ and $y$ are relatively prime and $w$ is monic,
  $P'_0 \coloneqq 2DQ'/y$ must be a monic polynomial. We have
  \begin{equation*}
    P'_0\cj{P'_0}=\frac{4Q'\cj{Q'}D\cj{D}}{y^2}=\frac{D\cj{D}}{y}=g, \quad 2P'_0\cj{Q'}=\frac{4Q'\cj{Q'}D}{y}=D.
  \end{equation*}
  Because of $\deg P'_0 = \deg P_0$ and because the left gcd of $g$ and
  $D$ is unique, this implies $P_0 = P'_0$ and $Q' = Q_0/2 = Q$. This
  concludes the proof of uniqueness.
\end{proof}

\begin{remark}
  In case of a rational curve of circularity zero, the primal part $P$
  of the rational motion $C = P + \eps Q$ is a real polynomial and the
  trivial solution consists of the curvilinear translation along the
  given curve. In all other cases, \autoref{th:2} guarantees solutions
  of lesser degree than curvilinear translations.
\end{remark}

Our formulation of \autoref{th:2} requires a special moving point, the
origin $1$, to generate the rational curve and produces a motion
polynomial $C$ with the special property $C(\infty) \peq 1$.  These
conditions are just coordinate dependencies and are convenient for our
algebraic formulation and proof. If we allow coordinate changes, we
may translate $1$ to an arbitrary moving point (this amounts to
replacing $C$ by $CT^{-1}$ with a suitable translation, as in the
proof of \autoref{th:1}) and orient the coordinate axis arbitrarily
(this amounts to right-multiplying $C$ with $r \in \H$). Thus, we can
re-formulate \autoref{th:2} as

\begin{corollary}
  \label{cor:2}
  There is a unique (up to coordinate changes) rational motion of
  minimal degree in the dual quaternion model of rigid body
  displacements with a prescribed rational trajectory. If the
  trajectory is of degree $d$ and circularity $c$, the minimal motion
  is of degree $n = d - c$.
\end{corollary}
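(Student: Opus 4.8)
The plan is to deduce \autoref{cor:2} from \autoref{th:2} by reinterpreting the two normalisations built into the latter — tracking the special moving point $1$, and the condition $C(\infty)\peq 1$ — as harmless coordinate changes, and then checking that such coordinate changes preserve the dual quaternion degree of a rational motion.

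First I would fix a reduced rational parametric equation $x = x_0 + x_1\qi + x_2\qj + x_3\qk$ of the prescribed trajectory, with degree $d$ and circularity $c$ (the latter being parametrisation-independent by the discussion following \autoref{def:1}). Then I would install the hypotheses of \autoref{th:2} by two coordinate changes. A change of coordinates in the fixed space is a left multiplication of the motion polynomial by a constant element of $\SE$; composing with the translation that moves $x(\infty)$ to the origin achieves $x(\infty)\peq 1$, after first applying, if necessary, a M\"obius reparametrisation that brings a finite point of the curve to the parameter value $\infty$. A change of coordinates in the moving frame is a right multiplication by a constant element of $\SE$; choosing it to carry the prescribed moving point to $1$ reduces us to tracking the origin, in the same spirit as the translation trick at the beginning of the proof of \autoref{th:1} (there realised by a motion polynomial $T$, here it suffices to use the constant displacement taking the moving point to $1$, or to keep $T$ and note that both produce the same orbit). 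After these reductions \autoref{th:2} applies and yields a unique monic motion polynomial $C = P + \eps Q$ with $C(\infty) = 1$ whose trajectory of $1$ is the transformed $x$; by the construction in the proof of \autoref{th:2} its degree is $n = d - c$, and by \autoref{cor:1} no smaller degree is possible. Transporting $C$ back through the two coordinate changes gives a rational motion with the prescribed trajectory and degree $d - c$.

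For the uniqueness clause I would take any two minimal rational motions realising the prescribed trajectory, normalise each by coordinate changes as above so that it satisfies the hypotheses of \autoref{th:2}, and invoke the uniqueness there to conclude that the two normalised motions coincide; hence the original two differ by a composition of coordinate changes, which is again a coordinate change, as asserted. One should check along the way that the residual freedom left after normalising — the coordinate changes fixing the point $1$ and the value $\infty$ — is precisely what \autoref{th:2} has already absorbed, into the choice of reduced $x$ and the orientation of the moving frame, so that the normalised motion is genuinely unique and not merely unique up to a leftover symmetry.

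The step I expect to be the main obstacle is the bookkeeping rather than any computation: one must verify that left and right multiplication by constant elements of $\SE$ and M\"obius reparametrisation each leave the dual quaternion degree unchanged — immediate for the linear operations, and true for reparametrisation because a degree-$n$ polynomial parametrisation of a motion transforms into another of degree $n$ (the leading coefficient drops only if one reparametrises onto a base point, which cannot happen for the reduced minimal motion of \autoref{th:2}) — and that the two normalisations can always be arranged for any prescribed rational trajectory and any choice of moving point. Granting these points, \autoref{cor:2} is simply \autoref{th:2} stated invariantly.
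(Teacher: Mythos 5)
Your proposal is correct and follows essentially the same route as the paper, which derives \autoref{cor:2} from \autoref{th:2} by the same two coordinate changes (right multiplication by a translation to move the tracked point to $1$, as in the proof of \autoref{th:1}, and right multiplication by $r\in\H$ for the orientation of the moving frame), together with the normalisation $x(\infty)\peq 1$. Your additional care about the M\"obius reparametrisation and the degree being preserved under these operations only makes explicit what the paper leaves implicit.
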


\begin{remark}
  Frequently, the degree of a rational motion is defined as maximal
  degree of one of its trajectories. With this concept of degree,
  \autoref{cor:2} is no longer true. A counter example is a circle
  which occurs as trajectory of a circular translation and of a
  rotation about its centre. For both motions the maximal degree of a
  trajectory is two.
\end{remark}

Let us illustrate \autoref{th:2} by two examples:

\begin{example}
  The rational curve given by the parameterised equations
  \begin{gather*}
    x_0 = (t^2+2 t+2) (t^2+2 t+5) (t+1),\quad
    x_1 = 2  (t^2-5) (t^2+2 t+2),\\
    x_2 = -4  (t+5) (t+1)^2,\quad
    x_3 = -2 t (t+2) (t+5) (t+1)
  \end{gather*}
  is of degree five. From the factorisation
  \begin{equation*}
    x_1^2 + x_2^2 + x_3^2 = 8(t^2+2t+5)(t^2+4t+5)(t^2+2t+2)^2
      \end{equation*}
  we see that its circularity is two. By \autoref{th:2}, it is
  trajectory of a rational motion of minimal degree three whose primal
  part, by \autoref{th:1}, has precisely one linear factor. With $y$,
  $w$ and $g$ defined by
  \begin{equation*}
    g = (t^2+2 t+2) (t^2+2 t+5),\quad
    w = t + 1,\quad 
    y = 8(t^2+4 t+5) (t^2+2 t+2),   
  \end{equation*}
  we have
  \begin{equation*}
    x_0 = g w,\quad x_1^2+x_2^2+x_3^2=gy.
  \end{equation*}
  Setting $D = x_1\qi + x_2\qj + x_3\qk$ and using the Euclidean
  algorithm we compute, according to \autoref{lem:3}, the left gcd
  $P_0$ of $D$ and $g$: With
  \begin{equation*}
    P_0 = t^2 + 2t +1 - (t+1)\qi+ (2t+2)\qj-2\qk
  \end{equation*}
  and 
  \begin{equation*}
    Q_0 = -2t-2-(2t^2+4t+2)\qi+(2t+6)\qj+(2t^2+8t+6)\qk 
  \end{equation*}
  we have $g = P_0\cj{P_0}$, $D = P_0\cj{Q_0}$. Then we set $P = wP_0$
  and $Q = Q_0/2$.  As expected, $P$ has a real polynomial factor of
  degree $1$.  The motion polynomial we want is $C \coloneqq P+\eps Q$.
\end{example}

\begin{example}
  \label{ex:3}
  The Viviani curve \cite{peternell13} is given by
  \begin{equation*}
    x_0 = (1+t^2)^2,\quad
    x_1 = (1-t)^2(1+t)^2,\quad
    x_2 = 2t(1-t)(1+t),\quad
    x_3 = 2t(1+t^2).
  \end{equation*}
  In order to meet the requirements of \autoref{th:2}, we translate it
  by the vector $(-1, 0, 0)$ and obtain
  \begin{equation}
    \label{eq:10}
    x_0 = (1+t^2)^2,\quad
    x_1 = -4t^2,\quad
    x_2 = 2t(1-t)(1+t),\quad
    x_3 = 2t(1+t^2).
  \end{equation}
  This curve lies on the unit sphere and is entirely circular. As in
  the previous example, we compute
  \begin{equation*}
    g = \gcd(x_0,x_1^2+x_2^2+x_3^2) = (1+t^2)^2,\quad
    w = x_0/g = 1,\quad
    y = (x_1^2+x_2^2+x_3^2)/g = 8t^2.
  \end{equation*}
  The left gcd of $D \coloneqq x_1\qi + x_2\qj + x_3\qk$ and $g$ is
  \begin{equation*}
    P_0 = t^2 - t(\qj + \qk) - \qi,
  \end{equation*}
  the right quotient of $D$ and $P_0$ is
  \begin{equation*}
    Q_0 = 2t(\qk - \qj),
  \end{equation*}
  and the minimal motion to the curve \eqref{eq:10} is
  \begin{equation*}
    C = t^2 - (\qj + \qk - \eps(\qj + \qk))t - \qi.
  \end{equation*}
  We can simplify by conjugating with $T \coloneqq 1 +
  \frac{1}{2}\eps\qi$ whence we get
  \begin{equation*}
    C' \coloneqq \cj{T}CT = t^2 - t(\qj + \qk) - \qi = (t - \qk)(t - \qj).
  \end{equation*}
  This shows that $C'$ is the composition of two rotations about the
  second coordinate axis and the third coordinate axis with equal
  angular velocities, i.e., the motion generated by the rolling of a
  spherical circle of radius $\pi/4$ on a spherical circle of the same
  radius. This is illustrated in \autoref{fig:viviani}. There, the
  moving frame is rigidly attached to the rolling circle.
\end{example}

\begin{figure}
  \centering
  \includegraphics{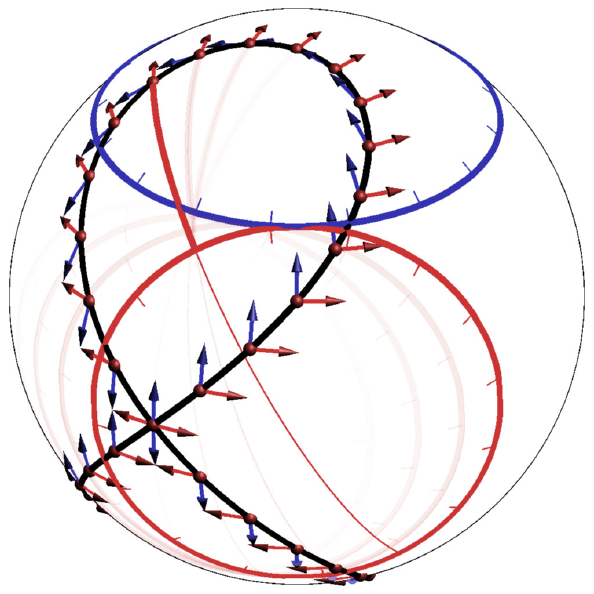}
  \caption{Minimal motion of the Viviani curve (rolling of spherical circles)}
  \label{fig:viviani}
\end{figure}

\section{Discussion of results}
\label{sec:discussion}

This article unveiled some relations between rational motions and
their trajectories. A rational curve occurs as trajectory of a unique
(up to coordinate changes) rational motion of minimal degree in the
dual quaternion model. This was a surprise to the authors as a mere
trajectory seems to leave a lot of freedom for the construction of a
suitable rational motion. Apparently, the requirement for minimality
is rather restrictive.

Calling a rational curve ``generic'' if its circularity is zero and a
rational motion ``generic'' if its primal part has no real factors
(the spherical motion component has full degree), we may also say that
rational motions of minimal quaternion degree to a generic trajectory
are non-generic. Conversely, the trajectories of a generic rational
motion are entirely circular and hence non generic.

In conjunction with the factorisation of generic rational motions
\cite{hegedus13} and its extension \cite{FMP} to non-generic motion
polynomials, our results contribute to a variant of Kempe's
Universality Theorem \cite[Section~3.2]{demaine07} for rational space
curves. Via motion factorisation, it is possible to construct linkages
to generate a given rational motion and hence also a given rational
trajectory. The dual quaternion degree of the generating motion is
directly related to the number of links and joints in the mechanism. As a
consequence, rational curves can be generated by spatial linkages with
much fewer links and joints than those implied by the asymptotic
bounds for algebraic space curves given by \cite{abbott08}. For
circular curves, the numbers of links and joints are even less. A
precise formulation and a rigorous proof will be worked out in a
future paper.

Our main result in Corollary~\ref{cor:2} raises questions. Is a similar
uniqueness statement true for algebraic curves and algebraic motions?
What rational motions are not minimal for any of their trajectories?
The Darboux motion and the circular translation of Remark~\ref{rem:1} are
examples. Finally, determine simple (classes of) curves with simple
minimal motion as in Example~\ref{ex:3} and exploit their low degree and
rationality in a CAGD or engineering context.

\section*{Acknowledgements}
\label{sec:acknowledgements}

This work was supported by the Austrian Science Fund (FWF): P~26607
(Algebraic Methods in Kinematics: Motion Factorisation and Bond
Theory).

\bibliographystyle{amsplain}
\providecommand{\bysame}{\leavevmode\hbox to3em{\hrulefill}\thinspace}
\providecommand{\MR}{\relax\ifhmode\unskip\space\fi MR }
\providecommand{\MRhref}[2]{  \href{http://www.ams.org/mathscinet-getitem?mr=#1}{#2}
}
\providecommand{\href}[2]{#2}

\end{document}